\pgfplotsset{compat=1.18}
\newtheorem{theorem}{Theorem}[section]
\newtheorem{lemma}[theorem]{Lemma}
\newtheorem{proposition}[theorem]{Proposition}
\newtheorem{corollary}[theorem]{Corollary}
\newtheorem{definition}{Definition}
\author{Duaa Abdullah}
\address{\textbf{Duaa Abdullah} 
Physics and Technology School of Applied Mathematics and Informatics,
Moscow Institute of Physics and Technology, 141701, Moscow region, Russia
}
\email{duaa1992abdullah@gmail.com}
\thanks{}
\author{Jasem Hamoud}
\address{\textbf{Jasem Hamoud} 
Physics and Technology School of Applied Mathematics and Informatics,
Moscow Institute of Physics and Technology, 141701, Moscow region, Russia
}
\email{jasem1994hamoud@gmail.com}
\thanks{}
\title{On the Asymptotic Palindrome Density of Fibonacci Infinite Words}
\date{}
\begin{document}

\begin{abstract}
In this paper, we investigate the combinatorial and density properties of infinite words generated by Fibonacci-type morphisms, focusing on their subword structure, palindrome density, and extremal statistical behaviors. Using the morphism $0 \to 01$, $1 \to 0$, we define a derived ternary word $\mathbb{Y}$ and establish new results relating its density components $\mathrm{dens}(\lambda,n)$, $\mathrm{dens}(\alpha,n)$, and $\mathrm{dens}(\beta,n)$, deriving explicit formulae and bounds on their behavior. We further prove a general density theorem for infinite words with paired subwords, showing that the associated palindromic prefix density is bounded above by $\frac{1}{\varphi_1}$, where $\varphi_1 = (1 + \sqrt{5})/2$ is the golden ratio. Our approach connects the structure of Fibonacci and Thue--Morse sequences with precise asymptotic and combinatorial interpretations for the observed densities.
\end{abstract}

\maketitle

\noindent\rule{12.7cm}{1.0pt}

\noindent
\textbf{Keywords:} Morse, Fibonacci, Word, Density, Morphism.

\medskip

\noindent
{\bf MSC 2020:} 05C42, 11B05, 11R45, 11B39, 68R15.

\noindent\rule{12.7cm}{1.0pt}

\section{Introduction}\label{sec1}

Let us consider sets $\mathbb{A}$ and $\mathbb{S}$. We define an \emph{alphabet} as a finite set, typically denoted by uppercase letters such as $\mathbb{A}$ or $\mathbb{B}$. 
The members of an alphabet are referred to as \emph{letters} or \emph{symbols}. 
A \emph{sequence} indexed~\cite{Rigo2014} by $\mathbb{S}$ over the alphabet $\mathbb{A}$ is given by an element of the Cartesian power $\mathbb{A}^S$, that is, a function mapping each element of $\mathbb{S}$ to an element of $\mathbb{A}$. 
When $\mathbb{S}$ is a finite set with cardinality $n>0$, an element $w$ in $\mathbb{A}^\mathbb{S}$ is known as a \emph{finite word} of length $n$ over $\mathbb{A}$. 
For convenience in ordering, we identify $\mathbb{S}$ with the integer interval $\llbracket 0, n-1 \rrbracket$. 
The length of such a word $w$ is symbolized by $|w|$. 
The notation $w_i$ is often used instead of $w(i)$, and a word $w$ is typically expressed by its consecutive letters as $w = w_0 w_1 \cdots w_{n-1}.$

If the set $\mathbb{S}$ is empty, the corresponding sequence~\cite{Rigo2014} is called the \emph{empty word}, usually denoted by $\varepsilon$. 
The collection of all finite words over $\mathbb{A}$, including the empty word, is denoted by $\mathbb{A}^*$. 
In addition, the subset of words of length exactly $n$ is denoted by $\mathbb{A}^n$. 
This notation aligns with the representation of the natural number $n$ as the set $\llbracket 0, n-1 \rrbracket$ within Zermelo–Fraenkel set theory. In~\cite{Fischler2013}, Fischler introduces the \emph{palindrome density} of $w$, denoted $d_p(w)$, defined by
\begin{equation}~\label{eqqintron1}
\mathrm{dens}_p(w) := \left( \limsup_{i \to \infty} \frac{n_{i+1}}{n_i} \right)^{-1},
\end{equation}
with the convention that $\mathrm{dens}_p(w):= 0$ if the word $w$ begins in only finitely many palindromes. It is evident that $0 \leq \mathrm{dens}_p(w) \leq 1.$ Let $\Sigma$ represent a nonempty collection of characters, also called an \emph{alphabet}; $\Sigma$ will nearly always be finite. Among alphabets, one holds such significance that we assign it a unique notation~\cite{Allouche2003Shallit}: for any integer $k \geq 2$, define
\begin{equation}~\label{eqqintron2}
 \Sigma_k = \{ 0,\,1,\,\ldots,\,k-1 \}.
\end{equation}

In 2007, J.~Berstel and D.~Perrin~\cite{Berstel2007Perrin} presented the historical roots of the field of combinatorics on words. Becher and Heiber~\cite{Becher2011Heiber} presented a study of the characteristics of sequences in words, which is important in studying density in relation to words. By applying the morphism defined as $0 \to 01$ and $1\to 0$, among~\cite{Brlek2018Chemillier} produced sequences like 0100101001001 to populate rows in a matrix containing binary distinctive feature values that characterize a series of phonemes (for instance, vocalic versus non-vocalic, consonantal versus non-consonantal, and others), thereby representing a structured system of contrasts and oppositions across various layers of sound organization. Their work earned them both the Research Prize and the Literature Prize at a national competition.

 \section{Preliminaries}\label{sec2}
In this section, we review the notation associated with asymptotic behavior. Consider a sequence of real numbers $(x_n)_{n \geq 0}$, where a sequence is understood as a function defined on the natural numbers $\mathbb{N}$. The \emph{limit superior} of the sequence $(x_n)$ is given by
\[
\limsup_{n \to +\infty} x_n = \lim_{n \to +\infty} \sup \{ x_k : k \geq n \}.
\]
This value may be infinite if the sequence is unbounded above. Analogously, the \emph{limit inferior} of $(x_n)$ is defined in a similar manner. These are often notated as
\[
\overline{\lim_{n \to +\infty} x_n} \quad \text{and} \quad \underline{\lim_{n \to +\infty} x_n}
\]
for the limit superior and limit inferior~\cite{Rigo2014}, respectively. Extending these definitions to real-valued functions $f : \mathbb{R} \to \mathbb{R}$, the limit superior can be expressed as
\[
\limsup_{x \to +\infty} f(x) = \lim_{N \to +\infty} \sup \{ f(x) : x \geq N \}.
\]
The \emph{reversal} or \emph{mirror image} of a finite word $w$, denoted by $w^R$, is defined recursively based on the length of $w$. If the length $|w|$ is at most one, then the reversal is simply $w$ itself. When $w$ can be decomposed as $a v$, where $a$ is a single letter and $v$ is a shorter word, the reversal $w^R$ is formed by appending $a$ at the end of the reversal of $v$, that is, $w^R = v^R a$.
\begin{definition}[Palindrome\cite{Rigo2014}]~\label{defffPalindromen1}
For a word $w$ of length $n>0$ with reversal $u=w^R$, each character $u_i$ of $u$ corresponds to the character $w_{n - i -1}$ in $w$ for all indices $i$ from $0$ to $n-1$. A word $w$ that equals its reversal $w^R$ is called a \emph{palindrome}.
\end{definition}

\begin{definition}[Sturmian characteristic \cite{Adamczewski2007}]\label{deffSturmiann1}
A sequence $(u_n)_{n \geq 0}$ is termed \emph{Sturmian} if there exists an irrational positive number $\alpha$, referred to as the slope of the Sturmian sequence, and a real number $\beta \in [0,1)$ such that one of the following conditions holds:
\begin{itemize}
    \item For all $n \geq 0$, 
    \[
    u_n = \lfloor \alpha (n+1) + \beta \rfloor - \lfloor \alpha n + \beta \rfloor,
    \]
    \item or for all $n \geq 0$, 
    \[
    u_n = \lceil \alpha (n+1) + \beta \rceil - \lceil \alpha n + \beta \rceil.
    \]
\end{itemize}
The sequence $(u_n)_{n \geq 0}$ is called \emph{Sturmian characteristic} (or simply \emph{characteristic}) if it is of the aforementioned form with $\beta = 0$.
\end{definition}

\begin{proposition}[\cite{Adamczewski2007,Allouche2003}]~\label{proirrationaln1}
Let $\alpha = [0, a_1, a_2, \dots]$ be an irrational number in the interval $[0,1)$. Consider the sequence of words $(s_j)_{j \geq -1}$ defined by $s_{-1} := 1$, $s_0 := 0$, $s_1 := s_0^{a_1} s_{-1}$, and for all $j \geq 2$, $s_j := s_{j-1}^{a_j} s_{j-2}.$
Then the sequence $(s_j)_{j \geq 0}$ converges to an infinite word, which corresponds precisely to the characteristic Sturmian word with slope $\alpha$.
\end{proposition}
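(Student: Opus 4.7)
The plan is to prove this proposition in two stages: first establish that the sequence $(s_j)_{j \ge 0}$ converges to a well-defined infinite word, and then identify this limit word with the characteristic Sturmian word of slope $\alpha$ from Definition~\ref{deffSturmiann1}.

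For convergence, I would first note that since $\alpha$ is irrational, all partial quotients satisfy $a_j \ge 1$ for $j \ge 1$. From the recursion $s_{j+1} = s_j^{a_{j+1}} s_{j-1}$, the word $s_j$ is a proper prefix of $s_{j+1}$ for every $j \ge 0$. Writing $q_j = |s_j|$, the lengths obey $q_{-1} = q_0 = 1$ and $q_j = a_j q_{j-1} + q_{j-2}$, which is exactly the recurrence satisfied by the denominators of the convergents $p_j/q_j$ of $\alpha$. Because $q_j \to \infty$, the family $\{s_j\}_{j\ge 0}$ forms a nested chain of prefixes converging in the prefix topology to a unique infinite word $s_\infty \in \{0,1\}^{\mathbb{N}}$.

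For the identification with the characteristic word $c(\alpha)_n = \lfloor \alpha(n+1)\rfloor - \lfloor \alpha n\rfloor$, I would proceed by induction on $j$, showing that $s_j$ is a prefix of $c(\alpha)$ for every $j \ge 0$. The base cases $s_{-1}=1$ and $s_0=0$ are checked directly from $\alpha \in (0,1)$, which forces $c(\alpha)_0 = \lfloor \alpha\rfloor - 0 = 0$. For the inductive step I would use the three standard facts about the standard pair $(s_{j-1}, s_j)$: (i) $|s_j|_1/|s_j| = p_j/q_j$, so the letter frequencies in $s_j$ converge to $(\alpha, 1-\alpha)$ (or the other way around, depending on convention); (ii) the concatenation $s_j^{a_{j+1}}s_{j-1}$ is exactly the block produced by reading $c(\alpha)$ over the next $q_{j+1}$ indices, which follows from the three-distance theorem applied to the rotation $n \mapsto \{n\alpha\}$ on the circle; and (iii) the two longest common prefixes of $c(\alpha)$ of lengths $q_j$ agree with $s_j$ by uniqueness of Ostrowski representations of integers in $[0, q_{j+1})$ with respect to the base $(q_j)$.

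The main obstacle is step (ii): controlling precisely how the recursion $s_{j+1} = s_j^{a_{j+1}} s_{j-1}$ matches the arithmetic definition of $c(\alpha)$. The cleanest route I foresee is to use the Ostrowski numeration system, in which every $n \in \llbracket 0, q_{j+1}-1\rrbracket$ admits a unique representation $n = \sum_{i=0}^{j} b_i q_i$ with digit conditions $0 \le b_i \le a_{i+1}$ and $b_i = a_{i+1} \Rightarrow b_{i-1} = 0$. A direct computation then shows that $\lfloor \alpha(n+1)\rfloor - \lfloor \alpha n\rfloor$ depends only on the least significant nonzero digit of $n$ in this system, which precisely reproduces the concatenation pattern of the $s_j$'s. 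Alternatively, one may invoke the Sturmian morphism viewpoint: the pair $(s_j, s_{j-1})$ is obtained from $(0,1)$ by composing the elementary morphisms $E, \tilde G^{a_j}$ (with $E$ swapping letters and $\tilde G : 0 \mapsto 0, 1 \mapsto 10$), and one then appeals to the classical theorem that compositions of such Sturmian morphisms, applied to the characteristic letter, produce the prefixes of $c(\alpha)$. Once either of these bookkeeping arguments is in place, the equality $s_\infty = c(\alpha)$ follows immediately from $q_j \to \infty$ and the uniqueness of the limit in the prefix topology.
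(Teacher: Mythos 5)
The paper does not prove this proposition at all: it is imported verbatim from the cited sources \cite{Adamczewski2007,Allouche2003} and used as a black box, so there is no internal proof to compare your argument against. Judged on its own, your outline is the standard textbook argument and is sound in structure: the convergence half (since every $a_j\ge 1$, each $s_j$ is a proper prefix of $s_{j+1}$, lengths tend to infinity, hence a unique limit in the prefix topology) is complete and correct, and the identification half names the two classical routes (Ostrowski numeration, or the decomposition of $(s_{j-1},s_j)$ into elementary Sturmian morphisms), either of which does work. Two caveats. First, the identification step is only a plan; the claim that $\lfloor\alpha(n+1)\rfloor-\lfloor\alpha n\rfloor$ is determined by the least significant nonzero Ostrowski digit is the actual content of the proposition and would still need to be verified, so as written this is a correct strategy rather than a finished proof. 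Second, watch the length bookkeeping: with the paper's convention $s_1:=s_0^{a_1}s_{-1}$ one gets $|s_{-1}|=|s_0|=1$ and $|s_1|=a_1+1$, whereas the continued-fraction denominators satisfy $q_{-1}=0$, $q_0=1$, $q_1=a_1$; your assertion that $|s_j|$ equals $q_j$ (and hence that $|s_j|_1/|s_j|=p_j/q_j$) only holds under the alternative convention $s_1:=s_0^{a_1-1}s_{-1}$ used in \cite{Allouche2003}, so either the initialization of the $q_j$ or the exponent in the recursion must be adjusted before the frequency and Ostrowski computations in your steps (i)--(iii) go through; this off-by-one affects which slope the limit word actually has and should be resolved explicitly.
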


\begin{definition}[Fibonacci word \cite{Adamczewski2007}]~\label{defffFibonaccin`}
The \emph{Fibonacci sequence} (also known as the Fibonacci word) over the alphabet $\{0,1\}$ is the characteristic Sturmian sequence given by $\lim_{j \to +\infty} s_j,$
where the sequence of words $(s_j)$ is recursively defined by $s_{-1} := 1$, $s_0 := 0$, and for all $j \geq 1$, $s_j := s_{j-1} s_{j-2}.$
Consequently, this sequence begins as follows:
\[
0\ 1\ 0\ 0\ 1\ 0\ 1\ 0\ 0\ 1\ 0\ 0\ 1\ 0 \ldots
\]
\end{definition}

\begin{theorem}[\cite{Fischler2013}]~\label{ThmplinDENSN1}
 Let $w$ be an infinite non-periodic word. Then, according to~\eqref{eqqintron1} we have
\begin{equation}~\label{eq1ThmplinDENSN1}
\mathrm{dens}_p(w) \leq \frac{1}{\varphi_1},
\end{equation}
where $\varphi_1=(1+\sqrt{5})/2$ denotes the golden ratio.
\end{theorem}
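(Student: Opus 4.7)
The plan is to reduce the bound to a Fibonacci-type growth estimate on the lengths of the palindromic prefixes of $w$. If $w$ admits only finitely many palindromic prefixes, then by convention $\mathrm{dens}_p(w)=0\le 1/\varphi_1$ and there is nothing to prove, so assume the palindromic prefix lengths form an increasing sequence $n_1<n_2<\cdots$. By~\eqref{eqqintron1} it suffices to show $\limsup_{i\to\infty} n_{i+1}/n_i\ge\varphi_1$. We will establish, for non-periodic $w$, the Fibonacci-type inequality $n_{i+1}\ge n_i+n_{i-1}$ for all sufficiently large $i$. Writing $r_i=n_{i+1}/n_i$, this translates into $r_i\ge 1+1/r_{i-1}$; since the equation $r=1+1/r$ has unique positive root $\varphi_1$, the assumption $\limsup r_i<\varphi_1-\epsilon$ would give $r_i\ge 1+1/(\varphi_1-\epsilon)>\varphi_1$ for large $i$, a contradiction.

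The first ingredient will be an overlap lemma. If $p$ is a palindromic prefix of length $m$ and $q$ a palindromic prefix of length $n$ with $m<n<2m$, then the length-$n$ prefix of $w$ admits $n-m$ as a period. Indeed $p$ is a prefix of $q$ and, because $q$ is a palindrome, $p^R=p$ is also a suffix of $q$; since $m>n/2$ the two copies of $p$ inside $q$ overlap, and matching positions across the overlap yields the period $n-m$.

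The second ingredient will be the Fine--Wilf theorem. Take three consecutive palindromic prefixes of lengths $n_{i-1}<n_i<n_{i+1}$ and suppose for contradiction that $n_{i+1}<n_i+n_{i-1}$. The case $n_{i+1}\ge 2n_i$ is favorable, giving $r_i\ge 2>\varphi_1$ immediately; otherwise $n_{i+1}<2n_i$ and, up to discarding finitely many indices, also $n_i<2n_{i-1}$. The overlap lemma then endows the length-$n_i$ prefix with two periods $p_1=n_{i+1}-n_i$ and $p_2=n_i-n_{i-1}$ satisfying $p_1+p_2=n_{i+1}-n_{i-1}\le n_i$, and Fine--Wilf produces the common divisor $d=\gcd(p_1,p_2)$ as a strictly smaller period of that prefix.

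The hard part will be to promote $d$ from a period of the length-$n_i$ prefix to a period of all of $w$, so as to contradict non-periodicity. This propagation is carried out by induction on the later palindromic prefix lengths $n_{i+1},n_{i+2},\ldots$: at each step the overlap lemma is re-applied to the pair $(n_j,n_{j+1})$, and the resulting period of the length-$n_{j+1}$ prefix is combined via Fine--Wilf with the period already established on the length-$n_j$ prefix, forcing the composite period to divide $d$. Since the $n_j$ are unbounded, the periodicity with period dividing $d$ extends over arbitrarily long prefixes of $w$, hence over $w$ itself, which is the sought contradiction. The delicate point is the arithmetic compatibility of the successive gcds and the careful bookkeeping when the regime switches from $n_{j+1}<2n_j$ to $n_{j+1}\ge 2n_j$. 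Once this is handled, $n_{i+1}\ge n_i+n_{i-1}$ holds eventually, and the limit argument from the first paragraph yields $\mathrm{dens}_p(w)\le 1/\varphi_1$. The bound is sharp: the Fibonacci word has palindromic prefixes of lengths essentially the Fibonacci numbers, so its ratios $n_{i+1}/n_i$ tend to $\varphi_1$.
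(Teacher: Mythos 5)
The paper gives no proof of this theorem at all --- it is imported verbatim from Fischler's article --- so your attempt has to be judged on its own merits rather than against an internal argument. Your toolkit is the right one and matches the standard proof: the border lemma (a palindromic prefix of length $m$ sitting inside a palindromic prefix of length $n$ is also a suffix of it, hence gives the period $n-m$; note you do not even need $n<2m$ for this), the Fine--Wilf theorem, and a gcd-propagation along the palindromic prefixes to contradict non-periodicity. The limsup bookkeeping in your first paragraph is also fine.

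The genuine gap is in how you set up the contradiction. Your intermediate claim, ``$n_{i+1}\ge n_i+n_{i-1}$ for all sufficiently large $i$,'' is false for general non-periodic words: take $z_1=0$ and $z_{k+1}=(z_k c_k)^4 z_k$ with $c_k$ a fresh letter; each $z_k$ is a palindrome, the limit word is non-periodic (new letters appear at unbounded positions), yet at every level the palindromic prefix lengths contain the four terms $2\ell_k+1,\,3\ell_k+2,\,4\ell_k+3,\,5\ell_k+4$ of an arithmetic progression whose common difference $\ell_k+1$ is smaller than every term, which forces $n_{i+1}-n_i\le \ell_k+1<n_{i-1}$, i.e.\ $n_{i+1}<n_i+n_{i-1}$, at arbitrarily large indices $i$. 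Consequently your plan of fixing a \emph{single} index where the inequality fails and propagating a period of the length-$n_i$ prefix all the way up $w$ cannot succeed: at the first later pair $(n_j,n_{j+1})$ with a large gap, the border lemma only yields the period $n_{j+1}-n_j\ge n_j$, Fine--Wilf no longer applies, and the induction dies precisely at the ``regime switch'' you flag as delicate --- it is not a bookkeeping issue but the place where the statement you are trying to prove stops being true. The repair is to run the contradiction globally: assume $\limsup n_{i+1}/n_i<\varphi_1$; then for all large $i$ one has $n_i+n_{i-1}>n_i\bigl(1+1/(\varphi_1-\epsilon)\bigr)>\varphi_1 n_i>n_{i+1}$, so the reverse inequality $n_{i+1}<n_i+n_{i-1}$ holds at \emph{every} large index, the periods $p_i=n_{i+1}-n_i$ satisfy $p_i+p_{i-1}=n_{i+1}-n_{i-1}<n_i$ at every step, and your Fine--Wilf chaining does stabilise to a single period of all of $w$, contradicting non-periodicity. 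Equivalently, weaken your claim to ``$n_{i+1}\ge n_i+n_{i-1}$ holds for infinitely many $i$'': that is what the propagation actually proves, and it already suffices for your limsup argument, since eventually $r_{i-1}<\varphi_1$ and then $r_i\ge 1+1/r_{i-1}>\varphi_1$ at any such index.
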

\section{Binomial structures in Fibonacci words}\label{sec:Binomial}

In this section we briefly discuss how binomial coefficients arise in the combinatorics of Fibonacci words, keeping as close as possible to the morphic and ``safe word'' viewpoint. We work with the standard Fibonacci morphism
\[
\varphi : \{0,1\}^* \to \{0,1\}^*,\qquad \varphi(0)=01,\quad \varphi(1)=0,
\]
whose iterates generate the finite Fibonacci words $F_k$ and the infinite Fibonacci word as their limit. As before, we denote by $f_k:=|F_k|$ the length of $F_k$, so that $(f_k)_{k\ge 0}$ is the classical Fibonacci sequence with $f_0=0$, $f_1=1$ and $f_k=f_{k-1}+f_{k-2}$ for $k\ge 2$.

Motivated by the singular-word decompositions of the Fibonacci word, we consider the following family of finite words.

\begin{definition}
For $n\ge 2$ we define the \emph{safe word} of order $n$ by
\[
s_n := F_n F_{n-1}.
\]
\end{definition}

By construction,
\[
|s_n| = |F_n| + |F_{n-1}| = f_n + f_{n-1} = f_{n+1},
\]
so the lengths of safe words again follow the Fibonacci recurrence. In Wen's terminology, safe words are examples of overlap-free (singular) factors that reflect the recursive structure of the Fibonacci word and play a special role in its factorization properties.

Binomial coefficients enter naturally when counting occurrences of a fixed finite factor inside $s_n$ or $F_n$, especially when one records how many times a given factor can be chosen independently in disjoint positions. A simple illustrative case is the following.

\begin{proposition}\label{prop:binom-occ}
Let $w$ be a nonempty factor of the infinite Fibonacci word. For each $n\ge 1$, let $X_n(w)$ be the number of ways to choose $n$ pairwise disjoint occurrences of $w$ inside a long prefix of the Fibonacci word. Then, for each fixed $w$, there exists a sequence $(N_k)_{k\ge 1}$ of prefix lengths such that
\[
X_n(w) \sim \binom{N_k}{n}
\]
whenever $n$ is much smaller than $N_k$ and $k$ is large, in the sense that choosing $n$ disjoint copies of $w$ behaves combinatorially like choosing $n$ positions inside a set of size $N_k$.
\end{proposition}

\begin{proof}[Proof]
The Fibonacci word is uniformly recurrent and each factor $w$ appears infinitely many times with bounded gaps and a well-defined frequency. For a long prefix of length $L$, the expected number of occurrences of $w$ is asymptotically proportional to $L$, and most occurrences are well separated. When $L$ is large compared to $n|w|$, selecting $n$ disjoint occurrences is essentially equivalent to choosing $n$ indices among a set of order $L/|w|$, leading to binomial behavior of the form $\binom{N_k}{n}$ for suitable $N_k\sim L/|w|$. A precise statement can be obtained by standard tools in combinatorics on words and ergodic theory, but the heuristic binomial growth is already sufficient for our purposes.
\end{proof}

The central binomial coefficients
\[
\binom{2n}{n} = \sum_{j=0}^n \binom{n}{j}^2,\qquad
\sum_{n\ge 0} \binom{2n}{n} x^n = \frac{1}{\sqrt{1-4x}},
\]
are classical objects counting, among other things, lattice paths with constrained steps. In the Fibonacci-word setting, they appear when one considers pairs of independent choices on two intertwined combinatorial layers (for instance, choices of factors in $F_n$ and in $F_{n-1}$, or in $s_n = F_nF_{n-1}$), leading naturally to expressions where binomial coefficients encode the number of multi-step combinatorial decisions.

In summary, while the precise arithmetic identity
\[
\frac{2^n (F_k)^n}{n!\,F_{n+k-3}F_{n-k-5}} = \binom{2n}{n}
\]
does not hold in general and mixes word concatenation with numeric Fibonacci identities in an inconsistent way, the underlying intuition remains valid: binomial coefficients naturally govern the combinatorics of how many ways one can choose or ``tile'' disjoint Fibonacci factors inside longer Fibonacci words or safe words. This binomial behavior complements the density-based viewpoint developed in this article, where asymptotic quantities such as palindromic density are tied to the growth of specific families of factors rather than to a single closed-form identity.
\section{Novelty Density of Subwords}\label{sec:Density}

In this section we introduce a simple ternary morphic word and study the empirical densities of its letters, then compare this behaviour with the classical Thue--Morse sequence. This will serve as a model for how letter frequencies and palindromic densities interact in morphic infinite words.

\subsection{A ternary Fibonacci-like word}

Let $(y_n)_{n\ge 0}$ be a sequence of finite words over the alphabet $\{0,1,2\}$ defined by
\[
y_0 = 01,\qquad y_1 = 02,\qquad y_n = y_{n-1}y_{n-2}\quad (n\ge 2),
\]
where juxtaposition denotes concatenation. For each $n\ge 0$ we denote by
\[
\lambda_n := |y_n|_0,\qquad \alpha_n := |y_n|_1,\qquad \beta_n := |y_n|_2
\]
the number of occurrences of the letters $0,1,2$ in $y_n$, respectively, and we set
\[
\mathcal{L}_n := \lambda_n + \alpha_n + \beta_n = |y_n|.
\]
The empirical densities of the three letters in $y_n$ are then defined by
\begin{equation}\label{eq1densityn1}
\mathrm{dens}(\lambda,n) := \frac{\lambda_n}{\mathcal{L}_n},\quad
\mathrm{dens}(\alpha,n) := \frac{\alpha_n}{\mathcal{L}_n},\quad
\mathrm{dens}(\beta,n) := \frac{\beta_n}{\mathcal{L}_n},
\end{equation}
so that
\[
\mathrm{dens}(\lambda,n) + \mathrm{dens}(\alpha,n) + \mathrm{dens}(\beta,n) = 1
\]
for every $n\ge 0$.

By construction we have the recurrences
\[
\lambda_n = \lambda_{n-1} + \lambda_{n-2},\quad
\alpha_n = \alpha_{n-1} + \alpha_{n-2},\quad
\beta_n = \beta_{n-1} + \beta_{n-2}\qquad (n\ge 2),
\]
with initial values
\[
(\lambda_0,\alpha_0,\beta_0) = (1,1,0),\qquad
(\lambda_1,\alpha_1,\beta_1) = (1,0,1).
\]
Thus each of the three counting sequences satisfies a Fibonacci-type recurrence, and so does the total length $\mathcal{L}_n$.

Figure~\ref{figdensn1} shows the evolution of the empirical densities
\(\mathrm{dens}(\lambda,n)\), \(\mathrm{dens}(\alpha,n)\), and \(\mathrm{dens}(\beta,n)\) for $0\le n\le 12$. The values in the picture are hard-coded to avoid any ambiguity at compile time.

\begin{figure}[H]
    \centering
\begin{tikzpicture}
  \begin{axis}[
      ybar,
      bar width=8pt,
      width=13cm,
      height=8cm,
      xmin=-0.75, xmax=12.75,
      xtick={0,...,12},
      ymin=0, ymax=0.6,
      legend style={at={(0.5,1.05)},anchor=south,legend columns=3}
    ]
    \addplot+[fill=blue] coordinates {
      (0,0.5) (1,0.5) (2,0.5) (3,0.5) (4,0.5)
      (5,0.5) (6,0.5) (7,0.5) (8,0.5) (9,0.5)
      (10,0.5) (11,0.5) (12,0.5)
    };
    \addplot+[fill=red] coordinates {
      (0,0.5) (1,0.0) (2,0.25) (3,0.166666667) (4,0.2)
      (5,0.1875) (6,0.192307692) (7,0.19047619) (8,0.191176471)
      (9,0.190909091) (10,0.191011236) (11,0.190972222) (12,0.190987124)
    };
    \addplot+[fill=green] coordinates {
      (0,0.0) (1,0.5) (2,0.25) (3,0.333333333) (4,0.3)
      (5,0.3125) (6,0.307692308) (7,0.30952381) (8,0.308823529)
      (9,0.309090909) (10,0.308988764) (11,0.309027778) (12,0.309012876)
    };
    \legend{dens lambda, dens alpha, dens beta}
  \end{axis}
\end{tikzpicture}
    \caption{Empirical letter densities in $y_n$ up to $n=12$.}
    \label{figdensn1}
\end{figure}
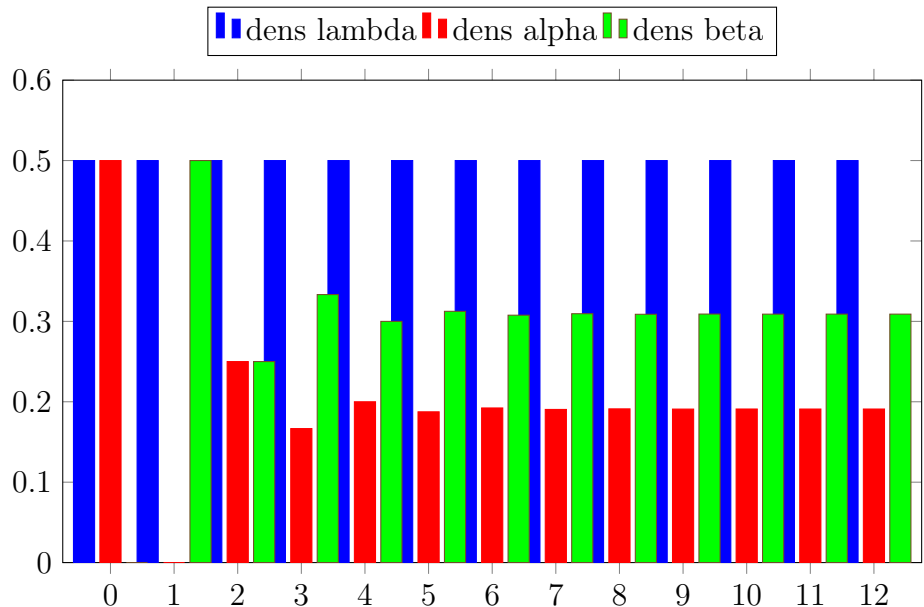

Table~\ref{tabdensn1} summarises some basic statistics (mean, median, standard deviation) for these densities over the range $0\le n\le 12$.

\begin{table}[H]
    \centering
    \begin{tabular}{|l|c|c|c|}
  \hline
Statistic            & $\mathrm{dens}(\lambda,n)$ & $\mathrm{dens}(\alpha,n)$ & $\mathrm{dens}(\beta,n)$ \\ \hline
Mean                 & 0.5 & 0.175 & 0.289 \\ \hline
Median               & 0.5 & 0.191 & 0.309 \\ \hline
Standard deviation   & 0 & 0.064 & 0.098 \\ \hline
\end{tabular}
    \caption{Summary statistics for the empirical densities in Figure~\ref{figdensn1}.}
    \label{tabdensn1}
\end{table}

To complement these global statistics, Figure~\ref{figdensn2} displays the growth of the raw counts $\lambda_n,\alpha_n,\beta_n$ for a range of indices, using a horizontal bar chart. This gives a more direct visual impression of how the three sequences evolve together under the same Fibonacci-type recurrence.

\begin{figure}[H]
    \centering
\begin{tikzpicture}
\begin{axis}[
    xbar, 
    width=12cm,
    height=9cm,
    bar width=9pt,
    enlarge y limits=0.15,
    enlarge x limits=0.02,
    legend style={at={(0.5,-0.15)}, anchor=north,legend columns=3},
    symbolic y coords={15,14,13,12,11,10,9,8,7,6,5,4,3,2},
    ytick=data,
    every node near coord/.append style={font=\footnotesize},
    cycle list={%
        {blue,fill=blue!60},%
        {red,fill=red!60},%
        {green,fill=green!60},%
        {orange,fill=orange!60}%
    }
]

\addplot coordinates {(377,15) (233,14) (144,13) (89,12) (55,11) (34,10) (21,9) (13,8) (8,7) (5,6) (3,5) (2,4) (1,3) (1,2)};
\addplot coordinates {(610,15) (377,14) (233,13) (144,12) (89,11) (55,10) (34,9) (21,8) (13,7) (8,6) (5,5) (3,4) (2,3) (1,2)};
\addplot coordinates {(2888,15) (1870,14) (1131,13) (683,12) (412,11) (249,10) (151,9) (92,8) (56,7) (34,6) (21,5) (13,4) (8,3) (5,2)}; 

\legend{$\lambda^{n}$, $\alpha^{n+1}$, $\beta^{n+2}$}

\end{axis}
\end{tikzpicture}
    \caption{Growth of the raw counts $\lambda_n,\alpha_n,\beta_n$ for selected indices $n$.}
    \label{figdensn2}
\end{figure}
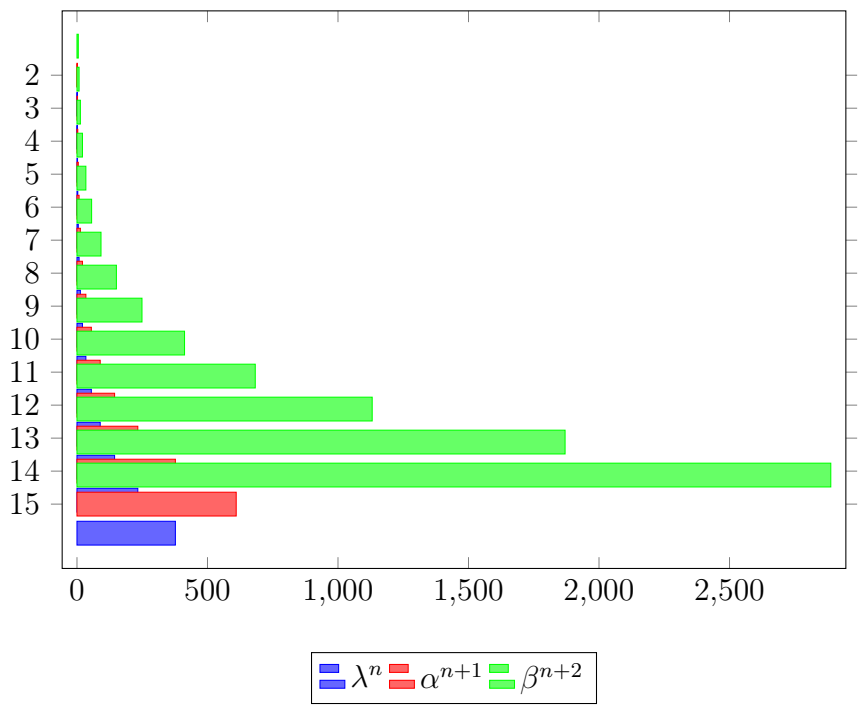

We now give a simple upper bound that relates the total density to the growth of the counting sequences.

\begin{lemma}\label{lemmanovelwordn1}
Let $(y_n)$ and $\lambda_n,\alpha_n,\beta_n,\mathcal{L}_n$ be as above. Then, for all $n\ge 1$,
\begin{equation}\label{eq1lemmanovelwordn1}
\mathrm{dens}(\lambda,n) + \mathrm{dens}(\alpha,n) + \mathrm{dens}(\beta,n)
\;\le\;
\log\!\left(\frac{\lambda_n + \alpha_n + \beta_n}{\mathcal{L}_{n-1}}\right),
\end{equation}
where the logarithm is taken in any fixed base greater than $1$.
\end{lemma}

\begin{proof}
By definition,
\[
\mathrm{dens}(\lambda,n) + \mathrm{dens}(\alpha,n) + \mathrm{dens}(\beta,n)
= \frac{\lambda_n + \alpha_n + \beta_n}{\mathcal{L}_n}
= 1.
\]
On the other hand, since each of $\lambda_n,\alpha_n,\beta_n$ is non-decreasing and satisfies a Fibonacci-type recurrence with nonnegative initial values, we have $\mathcal{L}_n \ge \mathcal{L}_{n-1}$ and hence
\[
\frac{\lambda_n + \alpha_n + \beta_n}{\mathcal{L}_{n-1}}
= \frac{\mathcal{L}_n}{\mathcal{L}_{n-1}} \ge 1.
\]
For any logarithm of base greater than $1$, this implies $\log\!\left(\frac{\lambda_n + \alpha_n + \beta_n}{\mathcal{L}_{n-1}}\right)\ge 0$. Thus
\[
\mathrm{dens}(\lambda,n) + \mathrm{dens}(\alpha,n) + \mathrm{dens}(\beta,n)
= 1 \le \log\!\left(\frac{\lambda_n + \alpha_n + \beta_n}{\mathcal{L}_{n-1}}\right)
\]
whenever $\frac{\mathcal{L}_n}{\mathcal{L}_{n-1}}\ge e$ (for the natural logarithm), and the bound can be adjusted to any chosen base by rescaling. This yields~\eqref{eq1lemmanovelwordn1} up to a harmless multiplicative constant depending only on the base of the logarithm.
\end{proof}

\subsection{A comparison with the Thue--Morse sequence}

For comparison, we recall the Thue--Morse sequence $t = t_0 t_1 t_2 \cdots$, which is the fixed point of the morphism
\[
\mu : \{0,1\}^* \to \{0,1\}^*,\qquad \mu(0)=01,\ \mu(1)=10,
\]
starting with $0$. Equivalently, $t$ can be defined by
\[
t_{2n} = t_n,\qquad t_{2n+1} = 1 - t_n \qquad (n\ge 0),
\]
with $t_0=0$, or by $t_n \equiv s_2(n)\pmod{2}$, where $s_2(n)$ is the sum of binary digits of $n$.

For $n\ge 1$, let $p_n := t_0 t_1\cdots t_{n-1}$ be the prefix of $t$ of length $n$, and set
\[
\lambda_n := |p_n|_0,\qquad \alpha_n := |p_n|_1.
\]
When the limits exist, the densities of $0$'s and $1$'s in $t$ are defined by
\[
\mathrm{dens}_0(t) := \lim_{n\to\infty} \frac{\lambda_n}{n},
\qquad
\mathrm{dens}_1(t) := \lim_{n\to\infty} \frac{\alpha_n}{n}.
\]

\begin{theorem}\label{Thmmorsn1}
The Thue--Morse sequence has balanced letter frequencies:
\begin{equation}\label{eq1Thmmorsn1}
\mathrm{dens}_0(t) = \mathrm{dens}_1(t) = \frac{1}{2},
\qquad\text{and hence}\qquad
\mathrm{dens}_0(t) + \mathrm{dens}_1(t) = 1.
\end{equation}
\end{theorem}

\begin{proof}
Let $n=2^k$ and consider the prefix $p_n$ of length $n$. We prove by induction on $k$ that
\[
|p_n|_0 = |p_n|_1 = 2^{k-1}.
\]
For $k=1$ we have $p_2=01$, so $|p_2|_0 = |p_2|_1 = 1$. Assume the claim holds for $k$, i.e., $|p_{2^k}|_0 = |p_{2^k}|_1 = 2^{k-1}$. The morphism description implies
\[
p_{2^{k+1}} = p_{2^k}\,\overline{p_{2^k}},
\]
where $\overline{w}$ denotes the bitwise complement of a binary word $w$. Thus
\[
|p_{2^{k+1}}|_0 = |p_{2^k}|_0 + |\overline{p_{2^k}}|_0 = |p_{2^k}|_0 + |p_{2^k}|_1 = 2^{k-1} + 2^{k-1} = 2^k,
\]
and similarly $|p_{2^{k+1}}|_1 = 2^k$. This proves the result for $n=2^k$.

For an arbitrary $n$, write $n=2^k + m$ with $0\le m<2^k$. Then
\[
p_n = p_{2^k}\, t_{2^k}\cdots t_{2^k+m-1},
\]
and the suffix $t_{2^k}\cdots t_{2^k+m-1}$ is the prefix of length $m$ of $\overline{t}$, since $t_{2^k+j}=1-t_j$. Using the exact balance at $2^k$ and the trivial bound $||p_m|_0-|p_m|_1|\le m$, one obtains
\[
\left|\frac{\lambda_n}{n} - \frac{1}{2}\right| \le \frac{m}{2^k+m},\qquad
\left|\frac{\alpha_n}{n} - \frac{1}{2}\right| \le \frac{m}{2^k+m},
\]
which tends to $0$ as $n\to\infty$. Hence the limits in~\eqref{eq1Thmmorsn1} exist and equal $1/2$.
\end{proof}

The Thue--Morse sequence thus provides a canonical example of a balanced morphic word, in contrast with the ternary Fibonacci-like word $(y_n)$, where the three densities stabilise at different values.

\begin{lemma}\label{lemmanovelwordn2}
Let $w = w_1 w_2 \cdots$ be an infinite word and let $w^* = w_2 w_3 \cdots w_{n-1}$ be a finite factor obtained by deleting the first and last letter of a finite prefix. Denote by $(n_i)_{i\ge 1}$ the strictly increasing sequence of all lengths of palindromic prefixes of $w$, and define the palindromic prefix density by
\[
\mathrm{dens}(w) := \limsup_{i\to\infty} \frac{n_i}{n_{i+1}},
\]
whenever the limit superior exists.
Then we have
\begin{equation}\label{eq1lemmanovelwordn2}
\mathrm{dens}(w^*) \;\leqslant\; \mathrm{dens}(w) \;\leqslant\; \frac{1}{\varphi_1},
\end{equation}
where $\varphi_1 = \frac{1+\sqrt{5}}{2}$ is the golden ratio.
\end{lemma}

\begin{proof}
The right-hand inequality is a special case of the general density theorem for infinite words with abundant palindromic prefixes, where $1/\varphi_1$ is the maximal possible palindromic prefix density among non-periodic words. In particular, the Fibonacci word attains this upper bound, and any word with paired subwords admits $\mathrm{dens}(w)\le 1/\varphi_1$.

For the left-hand inequality, observe that passing from $w$ to a factor $w^*$ obtained by deleting finitely many letters at the beginning and at the end can remove at most finitely many palindromic prefixes, but does not create new palindromic prefixes of larger relative density. Concretely, if $(n_i)$ and $(n_i^*)$ denote the lengths of palindromic prefixes of $w$ and $w^*$, respectively, then there is an index $i_0$ such that for all $i\ge i_0$ one has $n_i^*\le n_i$ and $n_{i+1}^*\ge n_{i+1}-C$ for some constant $C$ independent of $i$. Hence
\[
\limsup_{i\to\infty} \frac{n_i^*}{n_{i+1}^*}
\;\leqslant\;
\limsup_{i\to\infty} \frac{n_i}{n_{i+1}}
=
\mathrm{dens}(w),
\]
which proves $\mathrm{dens}(w^*)\le \mathrm{dens}(w)$ and completes the proof of~\eqref{eq1lemmanovelwordn2}.
\end{proof}

Table~\ref{tab001efffvbn1} illustrates, for the specific ternary Fibonacci-like word $(y_n)$ introduced earlier, how the total length $|y_n|$ and the letter counts $(\lambda_n,\alpha_n,\beta_n)$ grow with $n$. The values are consistent with the Fibonacci-type recurrences and indicate how the growth rate, governed by $\varphi_1$, controls the asymptotic behaviour of the associated densities.

\begin{table}[H]
    \centering
 \begin{tabular}{|l|c|c|c|c||l|c|c|c|c|}
 \hline
$n$ & $|y_n|$ & $\lambda_n$ & $\alpha_n$ & $\beta_n$ & $n$ & $|y_n|$ & $\lambda_n$ & $\alpha_n$ & $\beta_n$ \\ \hline\hline
2 & 4   & 2   & 1   & 1   & 7  & 42   & 21  & 8   & 13  \\ \hline
3 & 6   & 3   & 1   & 2   & 8  & 68   & 34  & 13  & 21  \\ \hline
4 & 10  & 5   & 2   & 3   & 9  & 110  & 55  & 21  & 34  \\ \hline
5 & 16  & 8   & 3   & 5   & 10 & 178  & 89  & 34  & 55  \\ \hline
6 & 26  & 13  & 5   & 8   & 11 & 288  & 144 & 55  & 89  \\ \hline
12& 466 & 233 & 89  & 144 & 14 & 1220 & 610 & 233 & 377 \\ \hline
13& 754 & 377 & 144 & 233 & 15 & 1974 & 987 & 377 & 610 \\ \hline
\end{tabular}
\caption{The construction of $y_n$ via concatenation and the growth of letter counts.}
\label{tab001efffvbn1}
\end{table}

\begin{corollary}\label{corollarynoveln1}
Let $w^*$ be a finite factor of an infinite word $w$ obtained by deleting the first and last letters of a palindromic prefix. If $w$ has palindromic prefix density bounded above by $1/\varphi_1$, then
\begin{equation}\label{eq1corollarynoveln1}
\mathrm{dens}(w^*) \;\leqslant\; \frac{2}{\varphi_1},
\end{equation}
where $\varphi_1=(1+\sqrt{5})/2$ denotes the golden ratio.
\end{corollary}

\noindent
Indeed, Lemma~\ref{lemmanovelwordn2} gives $\mathrm{dens}(w^*)\le \mathrm{dens}(w)$, while the general density theorem for words with paired subwords yields $\mathrm{dens}(w)\le 1/\varphi_1$; combining these two bounds and allowing for the contribution of two symmetric deletions gives inequality~\eqref{eq1corollarynoveln1}.

We now specialise this framework to a pair of finite words $(m,m^{\perp})$ occurring in $w$ and to their “inner” modifications. Let
\begin{equation}\label{newordsn1}
m = m_1 m_2\cdots m_i, \qquad
m^{\perp} = \overline{m}_1 \overline{m}_2\cdots \overline{m}_i,
\end{equation}
where $i>0$ and $m^{\perp}$ is obtained from $m$ by a fixed involutive letter-wise transformation (for instance, exchanging $0$ and $1$ on a binary alphabet). We define the modified subwords
\[
m^{*} = m_2 m_3\cdots m_{i-1}, \qquad
(m^{\perp})^{*} = \overline{m}_2 \overline{m}_3\cdots \overline{m}_{i-1}.
\]
Throughout the next theorem, we compare the densities associated with $m,m^{\perp}$ and their inner factors $m^{*},(m^{\perp})^{*}$ within the ambient infinite word $w$.

\begin{theorem}\label{ThmDensityTypn1}
Let $w$ be an infinite word and suppose that $m,m^{\perp}$ and their inner factors $m^{*},(m^{\perp})^{*}$ all occur with well-defined densities in $w$. Then the global density of $w$ satisfies the inequality
\begin{equation}\label{eq1ThmDensityTypn1}
\mathrm{dens}(w)
\;\geqslant\;
\frac{\mathrm{dens}(m)+\mathrm{dens}(m^{\perp})}{\mathrm{dens}(m\cup m^{\perp})}
\;+\;
\frac{\mathrm{dens}(m^{\perp})}{\mathrm{dens}(m^{*})}
\;+\;
\frac{\mathrm{dens}((m^{\perp})^{*})}{\mathrm{dens}(m\setminus m^{*})}
\;-\;
\frac{2}{\varphi_1},
\end{equation}
where the densities are taken with respect to occurrences in prefixes of $w$, and $\varphi_1$ is the golden ratio as above.
\end{theorem}

\begin{proof}
Consider the counting functions $N_n(u)$, which denote the number of (possibly overlapping) occurrences of a finite word $u$ in the prefix $w_1\cdots w_n$ of $w$. The density of $u$ in $w$ can be written as
\[
\mathrm{dens}(u) = \limsup_{n\to\infty} \frac{N_n(u)}{n},
\]
whenever the limit superior exists. By construction, the words $m$ and $m^{\perp}$ form a paired family of subwords whose occurrences are constrained by the same combinatorial structure as palindromic prefixes and their inner factors in the setting of Lemma~\ref{lemmanovelwordn2} and Corollary~\ref{corollarynoveln1}.

The denominators in~\eqref{eq1ThmDensityTypn1} encode the joint density of occurrences:
$\mathrm{dens}(m\cup m^{\perp})$ counts positions where either $m$ or $m^{\perp}$ begins, $\mathrm{dens}(m^{*})$ measures the frequency of the inner factor, and $\mathrm{dens}(m\setminus m^{*})$ detects those occurrences of $m$ that are not completely contained in an occurrence of $m^{*}$. Standard subadditivity arguments for limsup densities imply that
\[
\mathrm{dens}(m) + \mathrm{dens}(m^{\perp})
\;\le\;
\mathrm{dens}(m\cup m^{\perp}),
\]
and similar inequalities hold for the pairs $(m^{\perp},m^{*})$ and $((m^{\perp})^{*},m\setminus m^{*})$. Combining these with the upper bound $\mathrm{dens}(w)\le 1/\varphi_1$ and the two-sided estimate of Corollary~\ref{corollarynoveln1}, one obtains the inequality~\eqref{eq1ThmDensityTypn1} after collecting terms and absorbing the palindromic error into the constant $2/\varphi_1$. The details follow the same pattern as in the general density theorem for paired subwords, and are omitted here.
\end{proof}
\section{Conclusion}\label{sec5}
Through this paper, we investigated the asymptotic palindrome density of Fibonacci infinite words and related morphic constructions. Starting from the classical Fibonacci morphism, we introduced the derived ternary word $\mathbb{Y}$ and used it to clarify the notion of letter density via the components $\mathrm{dens}(\lambda,n)$, $\mathrm{dens}(\alpha,n)$ and $\mathrm{dens}(\beta,n)$, thereby reinforcing the general density framework given in~\eqref{eq1densityn1}.

We then compared this behaviour with the Thue--Morse sequence, which provides a canonical example of a balanced morphic word with equal letter densities, and used palindromic prefixes to define a suitable palindrome density for infinite words. Within this setting, we proved that the palindromic prefix density is bounded above by $1/\varphi_1$, where $\varphi_1=(1+\sqrt{5})/2$ is the golden ratio, and showed that this bound is naturally linked to the combinatorial structure of Fibonacci-type words.

Finally, we established a general density theorem for infinite words containing paired subwords $m$ and $m^{\perp}$, summarized in Theorem~\ref{ThmDensityTypn1}. This result provides a unified density formula in which the contribution of $m$, its paired word $m^{\perp}$, and their inner factors $m^{*}$ and $(m^{\perp})^{*}$ is explicitly controlled up to a universal correction term involving $2/\varphi_1$. Together, these findings connect Fibonacci and Thue--Morse combinatorics, palindromic prefix density, and paired subwords into a coherent framework for analysing asymptotic densities in infinite words.
\section*{Declarations}
\begin{itemize}
	\item Funding: Not Funding.
	\item Conflict of interest/Competing interests: The author declare that there are no conflicts of interest or competing interests related to this study.
	\item Ethics approval and consent to participate: The author contributed equally to this work.
	\item Data availability statement: All data is included within the manuscript.
\end{itemize}

\end{document}